\newcommand{\opi}{\overline{\pi}}
\newcommand{\bbF}{\mathbb{F}}
\newcommand{\bbFq}{\mathbb{F}_q}
\newcommand{\bbFqn}{\mathbb{F}_{q^n}}
\newcommand{\calH}{\mathcal{H}}
\newcommand{\calT}{\mathcal{T}}
\newcommand{\ord}{\mathrm{ord}}
\newcommand{\bbL}{\mathbb{L}}
\newcommand{\calP}{\mathcal{P}}
\newcommand{\calI}{\mathcal{I}}
\newcommand{\calS}{\mathcal{S}}
\newcommand{\calO}{\mathcal{O}}
\newcommand{\calM}{\mathcal{M}}
\newcommand{\calX}{\mathcal{X}}
\newcommand{\bfx}{\mathbf{x}}
\newcommand{\bbZ}{\mathbb{Z}}
\newcommand{\bbQ}{\mathbb{Q}}
\newcommand{\bbR}{\mathbb{R}}
\newcommand{\bbC}{\mathbb{C}}
\DeclareMathOperator{\Norm}{N}
\newtheorem*{theorem*}{Theorem}
\newtheorem{theorem}{Theorem}
\newtheorem{lemma}{Lemma}
\theoremstyle{remark}
\begin{document}
\title[Perfect squares representing the number of rational points on elliptic curves]{Perfect squares representing the number of rational points on elliptic curves over finite field extensions}
\subjclass[2010]{14G05, 11J87, 11G07} 
\keywords{Elliptic curves,  subspace theorem, recurrence sequence.}

\author[K. C. Chim]{Kwok Chi Chim}
\address{K. C. Chim \newline
 Institute of Analysis and Number Theory, Graz University of Technology,
Kopernikusgasse 24/II,
A-8010 Graz, Austria.}
\email{chim\char'100math.tugraz.at}

\author[F. Luca]{Florian Luca}
\address{F. Luca \newline
School of Mathematics, Wits University, Johannesburg, South Africa \newline
And\newline
Research Group in Algebraic Structures and Applications, King Abdulaziz University, Jeddah, Saudi Arabia \newline
And\newline
Max-Planck-Institute for Mathematics, Vivatsgasse 7, 53111 Bonn, Germany. \newline
And\newline
Centro de Ciencias Matem\'aticas, UNAM, Morelia, Mexico }
\email{florian.luca\char'100wits.ac.za}

\begin{abstract}
Let $q$ be a perfect power of a prime number $p$ and $E({\mathbb F}_q)$ be an elliptic curve over ${\mathbb F}_q$ given by the equation $y^2=x^3+Ax+B$.  For a positive integer $n$ 
we denote by
$ \# E(\bbF_{q^n})$  the number of rational points on $E$ (including infinity) over the extension $\bbF_{q^n}$. 
Under a mild technical condition, we show that the sequence  $\lbrace \# E(\bbF_{q^n}) \rbrace_{n>0}$ 
contains at most $10^{200}$ perfect squares.   If the mild condition is not satisfied, then $\#E({\mathbb F}_{q^n})$ is a perfect square for infinitely many $n$ including all the multiples of $24$. 
Our proof uses a quantitative version of the Subspace Theorem. We also find all the perfect squares for all such sequences in the range $q < 50$ and $n\leq 1000$. 
\end{abstract}

\maketitle

\section{Introduction}\label{Sec:Intro}

Let $q$ be a prime power. We denote by $E/\bbFq$ the elliptic curve $y^2=x^3 +Ax+ B$  over the finite field $\bbFq$ of $q$ elements. The condition that $E$ is an elliptic curve
is equivalent to $4A^3+27B^2\ne 0$ in ${\mathbb F}_q$. It is known that if $p>3$, then any elliptic curve over ${\mathbb F}_p$ admits such an equation.
We consider the set  $E(\bbFq)$ of $\bbFq$-rational points on $E$ (including the point at infinity) over the finite field $\bbFq$. This set forms an abelian group with the Mordell-Weil addition law with the point at infinity as the identity (zero) element. 
We denote by $ \# E(\bbFq)$ the cardinality of this group. 
It is well-known by a theorem of Hasse that 
\begin{align*}
\left|  q + 1 -\# E(\bbFq) \right| \leq 2\sqrt{q}.
\end{align*}
We set: 
\begin{align*}
a=q+1-\# E(\bbFq).
\end{align*}
The number $a$ is called the trace of the Frobenius. For a positive integer $n$ we look at the set of points of $E$ including the point at infinity over the field extension $\bbFqn$ and denote this set by $E(\bbFqn)$. 
This is again an abelian group having $E({\mathbb F}_q)$ as a subgroup. 
We denote by $ \# E(\bbFqn)$ the cardinality of this group. 
Again by Hasse's theorem, we have
\begin{align*}
\left|  q^n + 1 -\# E(\bbFqn) \right| \leq 2\sqrt{q^n}.
\end{align*}

There is a formula which allows us to compute $\#E({\mathbb F}_{q^n})$ from knowledge of $q,~\#E({\mathbb F}_q)$ and $n$ only. Namely, let  
$\alpha, \beta \in \bbC$ be the roots of the quadratic polynomial $x^2 -ax +q$. Then $\alpha$ and $\beta$ are complex conjugates satisfying $\alpha+\beta=a$ and  
$|\alpha| = |\beta| = \sqrt{q}$. Furthermore, for every $n \geq 1$, the following formula holds
\begin{equation*}
\# E(\bbFqn) = 
q^n + 1 - a_n, 
\end{equation*}
 where $a_n=\alpha^n + \beta^n$.

The group $E(\bbFqn)$ has drawn much attention in literature. 
For instance, Luca and Shparlinski \cite{LucaShpar} consider $l(q^n)$, the exponent of $E(\bbFqn)$; that is, the largest possible order of points $P \in  E(\bbFqn)$, and show that $l(q^n)\ge q^{n(1-\varepsilon)}$
holds for all $\varepsilon>0$ as $n>n_{\varepsilon}$ with some (ineffective) value of $n_{\varepsilon}$. In this paper, we look at perfect squares in the sequence $\lbrace \# E(\bbFqn) \rbrace_{n>0}$. We show that under mild conditions, there are at most $10^{200}$ values of $n$
such that $\#E({\mathbb F}_{q^n})$ is a perfect square. These potential values are not effectively computable. If the mild conditions are not satisfied, then $\#E(F_{q^n})$ is a perfect square for infinitely many 
$n$ including all the multiples of $24$. Below is our concrete result.

\begin{theorem} \label{Thm_Main}
Let $E$ be an elliptic curve over ${\mathbb F}_q$ given by the equation $y^2=x^3+Ax+B$ and let $\alpha,~\beta$ be the two roots of $x^2-ax+q=0$, where $a=q+1-\#E({\mathbb F}_q)$.
\begin{enumerate}[(a).]
\item If ${\alpha}/{\beta}$ is not a root of unity, then there exists at most $5.6\times 10^{194}$ perfect squares in the sequence $\lbrace \# E(\bbFqn) \rbrace_{n>0}$.
\item If ${\alpha}/{\beta} = \zeta = e^{\frac{2\pi ki}{m}}$ is  a root of unity with $\gcd(k,m)=1$, then  $m=1, 2, 3, 4, 6$. Furthermore,  there exist infinitely many perfect squares  in the sequence $\lbrace \# E(\bbFqn) \rbrace_{n>0}$. 
In particular, $ \# E(\bbFqn) $ is a perfect square if $n \equiv 0 \;\mathrm{mod}\;m$. The following table shows the pattern.

\begin{center}
\begin{tabular}{| c | c | c | c | c | c |} 
\hline 
$m$ & $q$  & $a$  & $n$ & $a_n$ & $(u(q, a,  n))^2$ \\[1mm]
\hline 
\multirow{2}{6mm}{$\;\,1$} & \multirow{2}{22mm}{$\;\;p^{2v}, v\in \bbZ$}  & $2p^v$  & $ \forall n \in \bbZ$ & $2p^{vn}$ & $(p^{vn}-1)^2$   \\
  &    & $-2p^v$  & $ \forall n \in \bbZ$ & $2(-p^v)^n$ & $((-p^{v})^n - 1)^2$   \\
  \hline  
\multirow{2}{6mm}{$\;\,2$} & \multirow{2}{45mm}{$ \forall q$ except $q=p^{2v}$ where $p \equiv 1 \; \mathrm{mod}\;4, v\in \bbZ$}  & \multirow{2}{10mm}{$\quad 0$}  & $n\equiv 2 \; \mathrm{mod}\; 4$ & $-2q^{n/2}$ & $( q^{n/2}+1)^2$  \\
 &    &   & $n\equiv 0 \; \mathrm{mod}\;4$ & $2q^{n/2}$ & $( q^{n/2}-1)^2$  \\
 \hline 
\multirow{4}{6mm}{$\;\,3$} & \multirow{4}{32mm}{$ p^{2v}, v\in \bbZ$  except $p \equiv 1 \; \mathrm{mod}\;3$}  & \multirow{2}{10mm}{$\quad p^v$}  & $n\equiv 0 \; \mathrm{mod}\;6$ & $2p^{vn}$ & $(q^{n/2}+1)^2$   \\
 &    &    & $n\equiv 3 \; \mathrm{mod}\;6$ & $2p^{vn}$ & $(q^{n/2}-1)^2$   \\ \cline{3-6}
  &    & \multirow{2}{10mm}{$\;\, -p^v$}  & $n\equiv 0 \; \mathrm{mod}\;6$ & $2p^{vn}$ & $(q^{n/2}-1)^2$ \\
  &  &  & $n\equiv 3 \; \mathrm{mod}\;6$ & $-2p^{vn}$ & $(q^{n/2}-1)^2$ \\ \hline 
\multirow{4}{6mm}{$\;\,4$} & \multirow{2}{22mm}{$ 2^v$, $v$ is odd}  & \multirow{2}{10mm}{$\;2^{\frac{v+1}{2}} $}  & $n \equiv 0 \; \mathrm{mod}\;8$ & $2^{\frac{nv+2}{2}}$ & $(q^{n/2}-1)^2$   \\
  &   &   & $n \equiv 4 \; \mathrm{mod}\;8$ & $-2^{\frac{nv+2}{2}}$ & $(q^{n/2}+1)^2$  \\ \cline{2-6}
  & \multirow{2}{22mm}{$2^v$, $v$ is odd}  & \multirow{2}{10mm}{$-2^{\frac{v+1}{2}} $}  & $n \equiv 0 \; \mathrm{mod}\;8$ & $2^{\frac{nv+2}{2}}$ & $(q^{n/2}-1)^2$   \\
 &    &   & $n \equiv 4 \; \mathrm{mod}\;8$ & $-2^{\frac{nv+2}{2}}$ & $(q^{n/2}+1)^2$   \\ \hline
\multirow{2}{6mm}{$\;\,6$} & \multirow{2}{22mm}{$3^v$, $v$ is odd}  & \multirow{2}{10mm}{$\pm 3^{\frac{v+1}{2}} $}  & $n \equiv 0 \; \mathrm{mod}\;12$ & $2(3^{\frac{nv }{2}} )$ & $(q^{n/2}-1)^2$  \\
  &    &     & $n \equiv 6 \; \mathrm{mod}\;12$ & $-2(3^{\frac{nv }{2}} )$ & $(q^{n/2}+1)^2$   \\ 
\hline  
\end{tabular}
\end{center}
$ $\
\item[(c).] With the assumptions and notations from (b),  the only perfect squares in the sequence $\lbrace \# E(\bbFqn) \rbrace_{n>0}$ of the form  $(u(q, a, n))^2$ with $n\not\equiv 0\pmod m$ are  
\begin{align*}
&  1  = 1^2 =  (u(2, 2, 1))^2 = (u(3, 3, 1))^2, \\
 & 4 = 2^2 =  (u(3, 0, 1))^2, \\
 &  9 = 3^2 =  (u(2, 0, 3))^2  =  (u(8, 0, 1))^2, \\
& 25 = 5^2 = (u(2, -2, 5))^2=(u(32,8,1))^2.
\end{align*}
\end{enumerate}
\end{theorem}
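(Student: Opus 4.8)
The plan is to attack the three parts separately, since they require very different tools. For part (a), where $\alpha/\beta$ is not a root of unity, the equation $\#E(\bbF_{q^n}) = q^n + 1 - (\alpha^n + \beta^n) = u^2$ is a polynomial-exponential Diophantine equation in the unknowns $n$ and $u$. Writing $q = \alpha\beta$, the left side is $\alpha^n\beta^n + 1 - \alpha^n - \beta^n = (\alpha^n - 1)(\beta^n - 1)$, so we are asking when $(\alpha^n-1)(\beta^n-1)$ is a perfect square. I would treat $u^2 = (\alpha^n-1)(\beta^n-1)$ by regarding the right-hand side as a sum of the four terms $\alpha^n\beta^n, -\alpha^n, -\beta^n, 1$ and applying a quantitative version of the Subspace Theorem (à la Evertse–Schlickewei–Schmidt or the formulation used by Corvaja–Zannier) to the $S$-unit-type relation obtained after writing $u = \pm(\alpha^{n/2}\beta^{n/2} - \cdots)$ heuristically, or more robustly by bounding the number of solutions of the two-term $S$-unit equation that results from comparing $u$ with the dominant term $q^n$. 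Concretely, I expect to set up a linear form in the "coordinates" $(\alpha^n, \beta^n, 1, u)$ inside a suitable number field, verify the non-degeneracy hypotheses using that $\alpha/\beta$ is not a root of unity (so no proper vanishing subsum persists for infinitely many $n$, by the theorem on vanishing sums of $S$-units / the structure theorem), and read off from the quantitative Subspace Theorem an absolute bound on the number of $n$; tracking the constants through that machinery is what produces the explicit $5.6 \times 10^{194}$.

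For part (b), the claim is first that $\alpha/\beta$ being a root of unity $\zeta = e^{2\pi i k/m}$ with $\gcd(k,m)=1$ forces $m \in \{1,2,3,4,6\}$, and then that infinitely many squares occur. The bound on $m$ is essentially a statement about the Frobenius: $\alpha,\beta$ are algebraic integers with $\alpha\beta = q$, $\alpha+\beta = a \in \bbZ$, and $\alpha/\beta$ a root of unity means $\alpha = \zeta\beta$ with $|\alpha|=|\beta|=\sqrt q$; then $a = \beta(1+\zeta)$, and since $a$ is real and $\beta\bar\beta = q$, a short computation shows $a^2 = q(2 + \zeta + \zeta^{-1}) = q(2 + 2\cos(2\pi k/m))$, so $2 + 2\cos(2\pi k/m) = a^2/q \in \bbQ$; hence $\cos(2\pi k/m)$ is rational, and by Niven's theorem the only rational values of $\cos$ at rational multiples of $\pi$ are $0, \pm 1/2, \pm 1$, which pins $m$ down to $\{1,2,3,4,6\}$. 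For each such $m$ one then works out, as tabulated, the explicit shape of $q$ and $a$ (the supersingular/degenerate cases: $a = 0$; $a = \pm\sqrt q$ with $q$ a square; $a = \pm\sqrt{2q}$ or $\pm\sqrt{3q}$ in the remaining cases) and directly exhibits $a_n$ and the square $u(q,a,n)^2$ for $n$ in the stated residue class mod $m$ — this is a finite checklist of elementary identities like $q^n + 1 - 2q^{n/2} = (q^{n/2}-1)^2$, matching each row of the table.

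For part (c), we need the \emph{complete} list of squares of the form $u(q,a,n)^2$ with $n \not\equiv 0 \pmod m$, i.e. the "sporadic" squares outside the guaranteed arithmetic progressions, in the root-of-unity case. Since part (b) has reduced us to finitely many families parametrized by $q$ (a prime power) and $n$ in the non-congruent residue classes mod $m \le 6$, in each family $a_n = \alpha^n + \beta^n$ becomes an explicit expression of the form $\pm 2 q^{n/2}$ or $\pm q^{n/2}$ times a small factor (with $q^{n/2}$ replaced by the appropriate $\sqrt{q}^{\,n}$ when $q$ is not a square), so $\#E(\bbF_{q^n}) = q^n + 1 - a_n$ takes the form $q^n \pm c q^{n/2} + 1$ or similar, and the equation "$= u^2$" reduces to finitely many genuinely two-variable exponential equations such as $q^n + q^{n/2} + 1 = u^2$, $q^n - q^{n/2} + 1 = u^2$, $q^{2n} + 1 = u^2$, etc. Each of these is solved by elementary means: factoring $u^2 - q^n$ or completing the square to bound $u$ between consecutive values $q^{n/2}\pm 1$ (or $q^{n/2}$ and $q^{n/2}+1$), which forces $n$ small, and then a finite search over the small remaining $(q,n)$ recovers exactly the listed solutions $1, 4, 9, 25$. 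The main obstacle overall is part (a): pushing the quantitative Subspace Theorem through while keeping every constant explicit, and in particular verifying the non-degeneracy condition uniformly over all curves and all $q$ so that the final bound is truly absolute — the root-of-unity analysis in (b)--(c), by contrast, is elementary once Niven's theorem is invoked.
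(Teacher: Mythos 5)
Your overall architecture matches the paper's (quantitative Subspace Theorem for (a), classification of $m$ via rationality of $\cos(2\pi k/m)$ or equivalently $\phi(m)\le 2$ for (b), case-by-case Diophantine analysis for (c)), but two steps you wave at are genuinely where the work lives, and as sketched they do not go through.

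In part (a), two things are missing. First, the small linear form does not come for free from the vector $(\alpha^n,\beta^n,1,u)$: the paper has to write $u=q^{n/2}(1+w)^{1/2}$ with $w=q^{-n}(1-\alpha^n-\beta^n)$ and truncate the binomial series after the linear term to produce the explicit inequality $|uq^{n/2}-q^n+\tfrac12\alpha^n+\tfrac12\beta^n|<4.6$, which forces the ambient field to be $\bbQ(\sqrt{\alpha},\sqrt{\beta})$ and the vector to be $(uq^{n/2},q^n,\alpha^n,\beta^n)$; your ``heuristic'' $u=\pm(\alpha^{n/2}\beta^{n/2}-\cdots)$ is exactly the step that needs to be made precise with an explicit error term before any constant can be extracted. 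Second, and more seriously, the Subspace Theorem bounds only the \emph{number of subspaces} containing the large solutions, not the number of $n$; a priori a single subspace could contain infinitely many solutions. You must show each subspace contributes boundedly many $n$, which the paper does via the van der Poorten--Schlickewei zero-multiplicity bound for nondegenerate linear recurrences (Lemma \ref{Lemma_vdPSch}), and for subspaces whose equation involves the coordinate $uq^{n/2}$ one must first eliminate $u$ by squaring and substituting $u^2q^n=q^{2n}+q^n-q^n\alpha^n-q^n\beta^n$, yielding a six-term recurrence relation. Your appeal to ``no proper vanishing subsum persists for infinitely many $n$'' is qualitative and cannot produce the explicit count $5.6\times 10^{194}$.

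In part (c), your claim that the residual equations are ``solved by elementary means\dots completing the square to bound $u$ between consecutive values'' fails precisely in the cases $m=4$ and $m=6$ with $n$ coprime to $m$. There $q^{n}$ is an odd power of $2$ (resp.\ $3$) and the equation becomes $u^2=2^{x}\pm 2^{y}+1$ (resp.\ $u^2=3^x\pm 3^y+1$) with $y=(x+1)/2$; since $2^y\asymp\sqrt{2^x}$ is comparable to the gap between consecutive squares near $2^x$, the squeezing argument gives nothing, and indeed there are sporadic solutions such as $2^5-2^3+1=5^2$ (which is exactly the listed $(u(2,-2,5))^2=(u(32,8,1))^2=25$) and $2^9+2^4+1=23^2$. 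The paper has to invoke the complete solutions of these exponential equations due to Szalay \cite{Sz} and Luca \cite{Luca}; without those (or an equivalent argument) your part (c) cannot certify completeness of the list. The squeezing/factoring argument you describe does suffice for the $m=2$ case ($u^2=q^n+1$) and the $m=3$ case ($q$ a perfect square), which is how the paper handles them.
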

 
We ran a computation that looked at all elliptic curves over $\bbFqn$ for $q < 50$ and their extensions to $F_{q^{n}}$ for $1 \leq n \leq 10^3$. 
We extracted all the terms which are perfect squares using SAGE. Let $q$, $a$, $a_n$, $\alpha$ and $\beta$ be defined as before. The following is the numerical result.

\begin{theorem} \label{Thm_Sage}
For the sequence $\lbrace \# E(\bbFqn) \rbrace_{n>0}$ with ${\alpha}/{\beta}$ not a root of unity, 
the terms within $q < 50$ and $1 \leq n \leq 10^3$ which are perfect square, that is, for which $(u(q, a, n))^2 = \# E(\bbFqn)$,  are the following
\begin{align*}
4 & = 2^2 = (u(2, -1, 1))^2 = (u(2, -1, 3))^2 = (u(4, 1, 1))^2   =  (u(5, 2, 1))^2   \\
& \phantom{ = 2^2\; }= (u(7, 4, 1))^2  = (u(8, 5, 1))^2, \\
9 & = 3^2 =   (u(5, -3, 1))^2   = (u(7, -1, 1))^2   = (u(9, 1, 1))^2  =  (u(11, 3, 1))^2  \\
 & \phantom{ = 3^2\; }  =  (u(13, 5, 1))^2 , \\
16 & = 4^2 = (u(2, -1, 4))^2 = (u(2, 1, 4))^2 =  (u(4, -3, 2))^2 =  (u(4, 3, 2))^2  \\
& \phantom{ = 4^2\; }   =  (u(11, -4, 1))^2   =  (u(13, -2, 1))^2 =  (u(16, 1, 1))^2  =  (u(17, 2, 1))^2   \\
& \phantom{ = 4^2\; }   =  (u(19, 4, 1))^2   = (u(23, 8, 1))^2, \\
25 & = 5^2 
=  (u(17, -7, 1))^2   = (u(19, -5, 1))^2   = (u(23, -1, 1))^2 =  (u(25, 1, 1))^2 \\
& \phantom{ = 5^2\; } = (u(27, 3, 1))^2 =   (u(29, 5, 1))^2   = (u(31, 7, 1))^2   = (u(32, 8, 1))^2, \\
36 & = 6^2 = (u(3, 1, 3))^2 =   (u(27, -8, 1))^2  =  (u(29, -6, 1))^2   =   (u(31, -4, 1))^2  \\
 & \phantom{ = 6^2\; } = (u(36, 1, 1))^2  = (u(37, 2, 1))^2   = (u(41, 6, 1))^2   = (u(43, 8, 1))^2      \\
  & \phantom{ = 6^2\; }   =   (u(47, 12, 1))^2,  \\
49 &  =  7^2 =   (u(37, -11, 1))^2   =   (u(41, -7, 1))^2   =  (u(43, -5, 1))^2   =   (u(47, -1, 1))^2,  \\
& \phantom{ = 7^2\; }   =  (u(49, 1, 1))^2,  \\
144 & = 12^2 =  (u(5, 3, 3))^2,  \\
324 & = 18^2 =  (u(7, -4, 3))^2   =   (u(7, -1, 3))^2   = (u(7, 5, 3))^2,  \\
2116 & = 46^2 = (u(2, -1, 11))^2, \\
3025 & = 55^2 = (u(5, 1, 5))^2,  \\
4900 &  =  70^2 =  (u(17, -7, 3))^2,  \\
12100 &  = 110^2 =  (u(23, -1, 3))^2,  \\
24336 &  =  156^2 =  (u(29, -9, 3))^2,  \\
103684 &  = 322^2 =  (u(47, -1, 3))^2  .
\end{align*}

\end{theorem}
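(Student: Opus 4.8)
\textbf{Proof strategy for Theorem~\ref{Thm_Sage}.} The statement records the output of a finite search, so the proof I propose is an explicit, verifiable algorithm together with a short argument that the search is exhaustive within the stated range. The plan is as follows. First I would fix a prime power $q<50$ (there are $23$ of them) and enumerate the admissible Frobenius traces $a$: one iterates over all Weierstrass models over $\bbFq$, counts points, and records $a=q+1-\# E(\bbFq)$, or, equivalently, lists the integers $a$ with $|a|\le 2\sqrt q$ that are realised by an elliptic curve over $\bbFq$ (Waterhouse's theorem gives this list in closed form, and for $q$ prime every such $a$ occurs). By part~(b) of Theorem~\ref{Thm_Main} the ratio $\alpha/\beta$ is a root of unity exactly when $a^2\in\{0,q,2q,3q,4q\}$, so I would discard those traces, the corresponding sequences being treated in parts~(b) and~(c) and excluded from the present statement.

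Next, for each surviving pair $(q,a)$ I would generate the integer sequence $(a_n)_{n\ge 0}$ from the recurrence $a_0=2$, $a_1=a$, $a_n=a\,a_{n-1}-q\,a_{n-2}$ (so that $a_n=\alpha^n+\beta^n$), form $\# E(\bbFqn)=q^n+1-a_n$ for $1\le n\le 10^3$, and test each of these integers for being a perfect square by computing its integer square root $s=\lfloor\sqrt{\# E(\bbFqn)}\,\rfloor$ and checking $s^2=\# E(\bbFqn)$. Every time the test succeeds I would record the triple $(q,a,n)$ together with $s=u(q,a,n)$; grouping the recorded triples by the common value $s^2$ then yields precisely the displayed list. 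The search terminates after finitely many steps, since Hasse's bound makes the set of pairs $(q,a)$ finite for each $q$ and the range of $n$ is finite, and every arithmetic operation involved is exact.

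The two places where care is needed are not mathematical difficulties but implementation pitfalls. The first is completeness of the trace enumeration: to be certain that no elliptic curve over $\bbFq$ is missed one must run over all Weierstrass equations (or invoke Waterhouse's classification), rather than merely sampling curves. The hard part, though, will be exactness of the perfect-square test: for $q$ near $49$ and $n$ near $10^3$ the integer $q^n$ has well over a thousand decimal digits, so a floating-point square root is useless and the entire computation has to be carried out in arbitrary-precision integer arithmetic (which SAGE does by default). With these precautions the verification is routine; there is no genuinely hard step, and the theorem reduces to the correctness of the short script, which can be rerun independently.
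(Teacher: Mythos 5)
Your proposal is correct and matches the paper's approach: the paper offers no argument beyond stating that the list is the output of an exhaustive SAGE computation over all elliptic curves with $q<50$ and $1\le n\le 10^3$, which is exactly the finite search you describe (including the correct criterion $a^2\in\{0,q,2q,3q,4q\}$ for excluding the root-of-unity cases and the use of exact integer arithmetic for the square test).
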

We refrain from listing the corresponding elliptic curves for each Frobenius stated above as they can be readily obtained by computer.\\

We appeal to a quantitative version of Subspace Theorem to obtain the explicit upper bound for $n$ for the case when the number of perfect squares is finite in Theorem \ref{Thm_Main}.
In Section \ref{Sec:Prelim}, we shall present several lemmas for the preparation of Theorem \ref{Thm_Main}. The proof of Theorem \ref{Thm_Main} is presented in Section \ref{Sec:ProofThmMain}.

\section{Notations and Preliminary results}\label{Sec:Prelim}

We first present a lemma concerning the criteria on $a$ such that there is corresponding elliptic curve over the finite field $\bbFq$. It is rephrased from \cite[Theorem 4.1]{Waterhouse}.

\begin{lemma} \label{Lemma_Waterhouse}
Let $q = p^b$ be a perfect power of $p$ and let $N = p^b + 1 - a$. 
Then there exists an elliptic curve $E/\bbFq$ such that 
$\#E(\bbF_q) = N $
if and only if 
$|a| \leq 2\sqrt{q}$ and one of the following is satisfied:
\begin{enumerate} [(a).]
\item $\gcd(a, p)=1$;
\item $b$ is even and one of the following is satisfied:
\begin{enumerate} [(i).]
\item $a= \pm 2\sqrt{q}$;
\item $p \not\equiv 1 \;\mathrm{mod}\;3$, and $a = \pm \sqrt{q}$;
\item $p \not\equiv 1 \;\mathrm{mod}\;4$, and $a = 0$;
\end{enumerate}
\item $b$ is odd, and one of the following is satisfied:
\begin{enumerate} [(i).]
\item $p=2$ or $3$, and $a=\pm p^{(b+1)/2}$;
\item $a = 0$.
\end{enumerate}
\end{enumerate}
\end{lemma}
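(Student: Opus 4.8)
The plan is to obtain this as a reformulation of Waterhouse's classification \cite[Theorem~4.1]{Waterhouse}, whose proof rests on the theory of endomorphism algebras of abelian varieties over finite fields (Tate, Honda, Waterhouse). The starting remark is that $\#E(\bbFq)=q+1-a$ depends only on the $\bbFq$-isogeny class of $E$, since $a$ is the trace of the Frobenius endomorphism, an isogeny invariant. Hence the question reduces to: for a given integer $a$, does there exist an abelian variety over $\bbFq$ of dimension $1$ whose Frobenius has characteristic polynomial $x^2-ax+q$? The inequality $|a|\le 2\sqrt q$ is exactly Hasse's bound (equivalently the Weil bound $|\pi|=\sqrt q$ on the roots $\pi$ of $x^2-ax+q$), so only the arithmetic alternatives (a)--(c) require an argument.

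For those I would invoke the Honda--Tate correspondence. A root $\pi$ of $x^2-ax+q$ is a Weil $q$-number and determines, up to isogeny, a simple abelian variety $A_\pi/\bbFq$ with $\bbQ(\pi)\subseteq\operatorname{End}^0(A_\pi)$; moreover
\begin{equation*}
2\dim A_\pi=e\cdot[\bbQ(\pi):\bbQ],
\end{equation*}
where $e$ is the index of the central division algebra $\operatorname{End}^0(A_\pi)$ over $\bbQ(\pi)$, namely the least common denominator of its local invariants: $\tfrac12$ at each real place of $\bbQ(\pi)$, $\tfrac{v(\pi)}{v(q)}\,[\bbQ(\pi)_v:\bbQ_p]\bmod 1$ at each place $v\mid p$, and $0$ at every other place. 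Thus $A_\pi$ is an elliptic curve exactly when $e\cdot[\bbQ(\pi):\bbQ]=2$, and I would separate the ordinary and supersingular regimes accordingly.

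In the ordinary case $\gcd(a,p)=1$ one has $a^2<4q$ strictly (equality would force $q$ a square and $a=\pm2\sqrt q$, whence $p\mid a$), so $K=\bbQ(\pi)$ is imaginary quadratic with no real place; and since $p\nmid a$ one checks $v_p(a^2-4q)=0$, so $a^2-4q$ is a nonzero square modulo $p$ and $p$ splits in $K$, say $(p)=v\bar v$, with $\pi$ of valuation $b$ at one of the two primes and $0$ at the other. All local invariants then vanish, so $e=1$, $2\dim A_\pi=2$, and $A_\pi$ is an elliptic curve (this is also the classical statement obtainable from Deuring's lifting theorem, by reducing a CM elliptic curve in characteristic zero with the prescribed Frobenius). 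In the supersingular case $p\mid a$ I would first note that the Honda--Tate computation forces $v_p(a)\ge b/2$: otherwise the two roots of $x^2-ax+q$ have unequal $p$-adic valuations, which compels $p$ to split in $\bbQ(\pi)$ and makes the index $e=b/\gcd(v_p(a),b)$ strictly bigger than $1$. Combining $v_p(a)\ge b/2$ with $|a|\le2\sqrt q$ leaves only finitely many $a$: when $b$ is even, $a=0$, $a=\pm\sqrt q$ or $a=\pm2\sqrt q$; when $b$ is odd, $a=0$, and $a=\pm p^{(b+1)/2}$ only for $p\in\{2,3\}$. For each of these $K=\bbQ(\pi)$ is explicit --- $\bbQ$ for $a=\pm2\sqrt q$; $\bbQ(i)$ for $a=0$ with $b$ even and for $a=\pm2^{(b+1)/2}$; $\bbQ(\sqrt{-3})$ for $a=\pm\sqrt q$ and for $a=\pm3^{(b+1)/2}$; $\bbQ(\sqrt{-p})$ for $a=0$ with $b$ odd --- and a direct evaluation of the local invariants yields $e\cdot[\bbQ(\pi):\bbQ]=2$ precisely in the listed cases: $a=\pm2\sqrt q$ always ($\operatorname{End}^0$ is the quaternion algebra over $\bbQ$ ramified only at $p$ and $\infty$, so $e=2$, $[\bbQ(\pi):\bbQ]=1$); $a=\pm\sqrt q$ only when $p\not\equiv1\pmod3$ (so $p$ does not split in $\bbQ(\sqrt{-3})$); $a=0$ with $b$ even only when $p\not\equiv1\pmod4$ (so $p$ does not split in $\bbQ(i)$); and $a=0$ with $b$ odd, or $a=\pm p^{(b+1)/2}$ with $p\in\{2,3\}$, always, since $p$ ramifies in $K$ and the $p$-adic invariant is then $0$. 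In all remaining supersingular possibilities $p$ splits, $e=2$ and $2\dim A_\pi=4$, giving an abelian surface and not an elliptic curve. Assembling the surviving cases reproduces (a)--(c).

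The main obstacle is the case-by-case bookkeeping in the supersingular regime: for each special value of $a$ one must correctly determine how $p$ decomposes in the quadratic field $\bbQ(\pi)$ (which is $\bbQ$, $\bbQ(i)$, $\bbQ(\sqrt{-3})$ or $\bbQ(\sqrt{-p})$), and then evaluate $\tfrac{v(\pi)}{v(q)}[\bbQ(\pi)_v:\bbQ_p]$ modulo $1$ with the ramification index and the normalisation of $v$ handled correctly; the congruences $p\not\equiv1\pmod3$, $p\not\equiv1\pmod4$ and the parity of $b$ appearing in the statement are exactly the inert/ramified conditions that keep the index $e$ equal to $1$. I also note that \cite[Theorem~4.1]{Waterhouse} proves more than we need, since it additionally classifies which orders of $\bbQ(\pi)$ arise as $\operatorname{End}(E)$; for the present lemma, which only asserts the existence of some curve with the prescribed number of points, the isogeny-class analysis above suffices, and in practice one simply cites Waterhouse directly --- which is why the lemma is stated as a rephrasing.
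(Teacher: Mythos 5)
Your proposal is correct and matches the paper's treatment: the paper gives no proof of this lemma at all, simply citing Waterhouse's Theorem 4.1, of which the statement is a direct rephrasing. Your Honda--Tate sketch (local invariants, the dimension formula $2\dim A_\pi=e\,[\bbQ(\pi):\bbQ]$, and the case analysis forcing $v_p(a)\ge b/2$ in the supersingular case) accurately reconstructs the argument underlying that cited theorem, so there is nothing to compare beyond noting that you supply details the paper deliberately omits.
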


Before we present the next lemma, we give a review on some standard notations of algebraic number theory, Diophantine equations and Diophantine approximations. \\

Let $\bbL$ be an algebraic number field of degree $D$ over $\bbQ$. Denote its ring of integers by $\calO_{\bbL}$ and its collection of places by $\calM_\bbL$. For a fractional ideal $\calI$ of $\bbL$, we denote by $\Norm_{\bbL}(\calI)$ its norm. We note that $\Norm_\bbL(\calI) = \#(\calO_\bbL/\calI)$ if $\calI$ is an ideal of $\calO_\bbL$, and the norm map is extended multiplicatively (using unique factorization) to all the fractional ideals of $\bbL$.\

For a prime ideal $\calP$, we denote by $\ord_\calP(x)$ the order at which it appears in the factorization of the principal ideal $x{\mathcal O}_{\mathbb L}$ generated by $x$ inside $\bbL$.\\

For $\mu \in \calM_\bbL$ and $x \in \bbL$, we define the absolute value $|x|_\mu$ as follows:
\begin{enumerate}[(i).]
\item $|x|_\mu: = |\sigma(x)|^{1/D}$ if $\mu$ corresponds to the embedding $\sigma : \bbL \hookrightarrow \bbR$;
\item $|x|_\mu := |\sigma(x)|^{2/D} = |\overline{\sigma}(x)|^{2/D}$ if $\mu$ corresponds to the pair of complex conjugate embeddings $\sigma, ~\overline{\sigma}: \bbL \hookrightarrow \bbC$;
\item $|x|_\mu := \Norm_\bbL(\calP)^{-\ord_{\calP}(x)/D} $ if $\mu$ corresponds to the the nonzero prime ideal $\calP$ of $\calO_\bbL$;
\end{enumerate}

We say that $\mu$ is real infinite or complex infinite in case (i) or (ii) respectively, and we say that $\mu$ is finite in case (iii). 

We note that these absolute values satisfy the product formula
\begin{equation} \label{prod1}
\prod_{\mu \in \calM_\bbL} |x|_\mu =1, \qquad \forall x \in \bbL^*.
\end{equation}

Let $m \geq 1$ be a positive integer. For $\mu \in \calM_\bbL$ and a nonzero vector $\bfx \in \bbL^m$, we put
\begin{align*}
|\bfx|_\mu &:= \left( \sum_{i=1}^m |x_i|_\mu^{2D} \right)^{1/2D} && \mbox{if $\mu \in \calM_\bbL$ is real infinite,} \\
|\bfx|_\mu &:= \left( \sum_{i=1}^m |x_i|_\mu^D \right)^{1/D} && \mbox{if $\mu \in \calM_\bbL$ is complex infinite,} \\
|\bfx|_\mu &:= \max\left(|x_1|_\mu, \dotso, |x_m|_\mu \right) && \mbox{if $\mu \in \calM_\bbL$ is finite.} 
\end{align*}
We note that for an infinite place $\mu$, $|\bfx|_\mu$ is a power of the standard Euclidean norm $\|\bfx\|$.\\

Next, we define the height of a nonzero vector $\bfx \in \bbL^m\backslash \{{\bf 0}\}$ as follows:
\begin{equation*}
H(\bfx) := H(x_i, \dotso, x_m) = \prod_{\mu \in \calM_\bbL} |\bfx|_\mu.
\end{equation*}

For $x \in \bbL$, we put
\begin{equation*}
\calH(x) := H((1, x)).
\end{equation*}

For a linear form 
\begin{align*}
L(\bfx) = \sum_{i=1}^m a_i x_i \in \bbL(x_1, \dotso, x_m),
\end{align*}
we define $H(L):=H(\mathbf{a})$, where $\mathbf{a} = (a_1, \dotso, a_m)$. We also put $|L|_\mu := |\mathbf{a}|_\mu$ for $\mu \in \calM_\bbL$.\\

We now state the explicit version of the Subspace Theorem by Evertse and Schlickewei \cite[Theorem 3.1]{EvertseS2002} (see also \cite{Evertse1996}).\

\begin{lemma}
 \label{Lemma_Subspace}
Let $\calS$ be a finite set of places of $\bbL$ containing all the infinite ones. Let $\lbrace L_{1,\mu}, \dotso, L_{m, \mu} \rbrace$ for $\mu \in \calS$ be $m > 1$ linearly independent sets of linear forms with coefficients in $\bbL$ such that for some real $H >0$, the inequality
\begin{align*}
H(L_{i,\mu}) \leq H
\end{align*}
holds for all $\mu \in \calS$ and $i=1, \dotso, m$. For a fixed $0<\delta <1$, consider the set $\calX$ of solutions $\bfx \in \bbL^m \setminus \lbrace \mathbf{0} \rbrace $ of the inequality
\begin{align} \label{Subspace_ToSatisfy}
\prod_{\mu \in \calS} \prod_{i=1}^m \frac{|L_{i,\mu}(\bfx)|_{\mu}}{|\bfx|_\mu} < H(\bfx)^{-m-\delta} \prod_{\mu \in \calS} |\det(L_{1,\mu}, \dotso, L_{m,\mu})|_\mu.
\end{align}
Then there exist $t$ proper linear subspaces $\calT_1, \dotso, \calT_t$ of $\bbL^m$, with
\begin{equation*}
t \leq (3m)^{2m \# \calS} 2^{3(m+9)^2} \delta ^{-m \# \calS - m - 4} \log 4D \log \log 4D
\end{equation*}
such that every solution $\bfx \in \calX$ with
\begin{equation*}
H(\bfx) \geq \max \lbrace m^{4m/\delta}, H \rbrace
\end{equation*}
belongs to $\calT_1 \cup \cdots \cup \calT_t$.
\end{lemma}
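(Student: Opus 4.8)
Since Lemma \ref{Lemma_Subspace} is the quantitative Subspace Theorem of Evertse and Schlickewei, I would not attempt to reprove it from scratch; instead, the plan is to follow the Schmidt--Evertse--Schlickewei method, whose skeleton is as follows. First I would normalize: using the product formula \eqref{prod1} (which leaves both sides of \eqref{Subspace_ToSatisfy} unchanged under scaling each form) one may assume $\prod_{\mu\in\calS}|\det(L_{1,\mu},\dots,L_{m,\mu})|_\mu=1$, and after a coordinate change reduce to counting, in $\mathbb{P}^{m-1}$, the projective solutions $\bfx$ of a ``twisted height'' inequality $\mathcal{H}_{\mathbf{c}}(\bfx) < H(\bfx)^{-m-\delta}$, where $\mathcal{H}_{\mathbf{c}}$ is assembled from the $|L_{i,\mu}(\bfx)|_\mu$ with a real weight system $\mathbf{c}=(c_{i,\mu})_{i,\mu}$. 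A compactness/volume argument shows that $O\bigl((3m)^{2m\#\calS}\bigr)$ such weight systems suffice to capture all of $\calX$, which produces the first factor in the bound for $t$.

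For a fixed weight system I would split the remaining solutions into \emph{small} ones (height below a suitable power of $\max\{m^{4m/\delta},H\}$) and \emph{large} ones. The small solutions lie in a bounded-height region of $\mathbb{P}^{m-1}(\bbL)$; a direct count of points of bounded height that happen to lie on the ``critical'' subvariety, together with the observation that they can be swept out by few hyperplanes, contributes the factor $2^{3(m+9)^2}\log 4D\,\log\log 4D$ and part of the $\delta$-power. A gap principle handles the solutions of moderate size: any two solutions that are close in the twisted metric either span a proper subspace or have heights separated by a fixed ratio, so each dyadic range of heights contains boundedly many ``essentially new'' solutions.

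The core of the proof is the treatment of genuinely large solutions by the Thue--Siegel--Roth machinery. For each $\mu\in\calS$ one attaches the convex body $\{\bfx : |L_{i,\mu}(\bfx)|_\mu\le e^{c_{i,\mu}}\}$ and, via Minkowski's and Mahler's theorems on successive minima, replaces a large solution by an adapted basis in which it is the last vector; this converts the Diophantine inequality into a statement about a fixed flag. Then, given a tuple $(\bfx_1,\dots,\bfx_r)$ of widely spaced large solutions, I would: build by Siegel's lemma a nonzero multihomogeneous auxiliary polynomial $P$ in $r$ blocks of $m$ variables, of prescribed multidegree and controlled height, vanishing to high order along the associated flags; use the inequality satisfied by the $\bfx_j$ to show that $P$ has abnormally large index at $(\bfx_1,\dots,\bfx_r)$; and derive a contradiction from a generalized Roth's lemma (the Esnault--Viehweg index estimate). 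This forces all but $O(1)$ of the large solutions, for that weight system, into a single proper subspace; combining over the $O\bigl((3m)^{2m\#\calS}\bigr)$ weight systems gives the stated bound on $t$, and the threshold $H(\bfx)\ge\max\{m^{4m/\delta},H\}$ is exactly the height below which the small-solution count is invoked instead.

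The main obstacle, and the origin of every explicit constant, is making the Roth machinery quantitative while tracking the dependence on $\#\calS$ (entering exponentially, through both the number of weight systems and the number of places over which $P$ must be controlled simultaneously in the ``absolute'' setting) and on $\delta$ (entering as a negative power, since it measures the gap exploited in the non-vanishing step). Balancing the degree and height of the auxiliary polynomial against the number of vanishing conditions imposed in Siegel's lemma, and then optimizing the index estimate against Roth's lemma across all $\mu\in\calS$ at once, is precisely where $(3m)^{2m\#\calS}\,2^{3(m+9)^2}\,\delta^{-m\#\calS-m-4}$ and $m^{4m/\delta}$ are forged; that simultaneous bookkeeping over all places is the most delicate ingredient.
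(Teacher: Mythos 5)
The paper does not prove this lemma at all: it is quoted verbatim as Theorem 3.1 of Evertse and Schlickewei \cite{EvertseS2002} (see also \cite{Evertse1996}), so your decision to defer to the literature rather than reprove it is exactly the paper's own approach. Your sketch of the Schmidt--Evertse--Schlickewei machinery (weight systems and twisted heights, gap principles for moderate solutions, successive minima plus an auxiliary polynomial and a non-vanishing/index estimate for the large ones) is a fair outline of how the cited result is actually established, with the minor caveat that the 2002 absolute version is deduced from an Absolute Parametric Subspace Theorem and uses Faltings' product theorem rather than the classical Roth--Esnault--Viehweg lemma in the non-vanishing step.
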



We next present a lemma used in \cite{LucaShpar} concerning an upper bound on the zero multiplicity of a nondegenerate linear recurrence sequence 
$\lbrace u_n \rbrace_{n\in \bbZ}$ of complex numbers due to van der Poorten and Schlickewei \cite{vdPSch1991}. 

\begin{lemma}  \label{Lemma_vdPSch}
Let $K \geq 1$ be an integer, and let $\alpha_i, \beta_i \in {\mathcal O}_\bbL  \backslash \lbrace 0\rbrace$, $i=1, \dotso, K$, such that $\alpha_i/\alpha_j$ is not a root of unity for any $1\leq i<j\leq K$. Then, the number of solutions $s$ of the equation
\begin{align*}
\sum_{j=1}^K \beta_j \alpha_j^n =0, \qquad n=1, 2, \dotso, 
\end{align*}
satisfies the inequality
\begin{align*}
s\leq (K-1)(4(D+\omega))^{2\omega+1},
\end{align*}
where $\omega$ is the number of prime ideal divisors of $\alpha_1 \cdots \alpha_K$ in ${\mathcal O}_\bbL$.
\end{lemma}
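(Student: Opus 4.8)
This is the zero-multiplicity theorem of van der Poorten and Schlickewei \cite{vdPSch1991}, so in the paper one simply invokes it; here is the plan I would follow to prove it. The argument runs by induction on $K$, the cases $K=1$ and $K=2$ being immediate ($\beta_1\alpha_1^n=0$ is unsolvable, while $\beta_1\alpha_1^n+\beta_2\alpha_2^n=0$ forces $(\alpha_1/\alpha_2)^n$ to equal the fixed value $-\beta_2/\beta_1$, which happens for at most one $n$ since the powers of a non-root-of-unity are pairwise distinct). For the inductive step, first dispose of the \emph{degenerate} solutions, namely those $n$ for which some subsum $\sum_{j\in J}\beta_j\alpha_j^n$ with $\varnothing\ne J\subsetneq\{1,\dots,K\}$ vanishes: choosing $J$ minimal, such an $n$ is a non-degenerate solution of the sub-equation indexed by $J$, which has fewer than $K$ terms, the same field $\bbL$, and a sub-product of $\alpha_1\cdots\alpha_K$ (hence at most $\omega$ prime divisors), so the inductive hypothesis applies.

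It remains to bound the non-degenerate solutions, which I would reduce to an $S$-unit equation. Dividing the relation by $\beta_K\alpha_K^n$ yields $\sum_{j=1}^{K-1}c_j\gamma_j^n=-1$ with $c_j=\beta_j/\beta_K$ and $\gamma_j=\alpha_j/\alpha_K$; each $\gamma_j$ is an $S$-unit, where $S$ consists of the infinite places of $\bbL$ together with the (at most $\omega$) prime ideals dividing $\alpha_1\cdots\alpha_K$, so $\#S\le D+\omega$, and no $\gamma_j$ and no ratio $\gamma_i/\gamma_j$ is a root of unity. To the vectors $\mathbf x_n:=(c_1\gamma_1^n,\dots,c_{K-1}\gamma_{K-1}^n,\,1)\in\bbL^K$, all of which lie on the hyperplane $x_1+\dots+x_K=0$, I would apply Lemma \ref{Lemma_Subspace} with $m=K$: at one place of $S$ use the linearly independent system $\{x_1+\dots+x_K,\ x_1,\dots,x_{K-1}\}$, and at every other place of $S$ the coordinate forms $x_1,\dots,x_K$. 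Because the first form vanishes at each $\mathbf x_n$, the left side of the inequality (\ref{Subspace_ToSatisfy}) is $0$, the inequality holds automatically (all determinants involved are $\pm1$), and the theorem places every $\mathbf x_n$ of sufficiently large height inside a union of $t$ proper subspaces $\calT_1,\dots,\calT_t$, with $t$ bounded explicitly in terms of $K$, $\#S$, $D$ and a chosen $\delta$.

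Membership of $\mathbf x_n$ in a fixed $\calT_i$ gives a new linear relation $\sum_{j<K}d_j c_j\gamma_j^n+d_K=0$; subtracting $d_K$ times the relation already known kills the constant term and leaves $\sum_{j\in J}(d_j-d_K)c_j\gamma_j^n=0$ over a proper index set $J$, and after multiplying through by $\alpha_K^n$ this is again an equation of the type covered by the statement, now with at most $K-1$ terms and at most $\omega$ relevant primes, so the inductive hypothesis bounds the number of such $n$. Summing over the $t$ subspaces, over the degenerate solutions, and over the finitely many vectors $\mathbf x_n$ of bounded height — these are genuinely finite in number because distinct $n$ produce pairwise non-proportional vectors $\mathbf x_n$ (again since the $\gamma_j$ are not roots of unity) — yields a finite, effective bound for $s$.

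The genuine difficulty is quantitative, not structural. The scheme above, driven by the general quantitative Subspace Theorem, produces a bound carrying factors such as $(3K)^{2K\#S}$ from Lemma \ref{Lemma_Subspace} and a further $2^{K}$ from the bookkeeping over vanishing subsums, which are far worse — and grow far faster in $K$ — than the target $(K-1)(4(D+\omega))^{2\omega+1}$. Getting down to the clean bound is exactly where van der Poorten and Schlickewei's specialized estimates are needed: in effect a hands-on $p$-adic/archimedean version of the Subspace and gap-principle argument that wins a controlled factor at each of the $\omega$ finite places, peels off a single exponential term at each inductive step, and treats the low-height solutions uniformly, with no dependence on the archimedean sizes of the $\alpha_j$. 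For the applications in this paper only a bound of this shape with an explicit absolute constant is required, so one may take the result from \cite{vdPSch1991} as a black box.
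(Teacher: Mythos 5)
The paper does not prove this lemma at all: it is quoted as a known result of van der Poorten and Schlickewei \cite{vdPSch1991}, with no argument supplied. So your bottom line --- cite the reference as a black box --- coincides with the paper's treatment, and your observation that the clean bound $(K-1)(4(D+\omega))^{2\omega+1}$ (independent of the archimedean data, only linear in $K$) cannot be recovered from the general quantitative Subspace Theorem by the naive induction is correct; that is exactly why the citation is unavoidable.

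Since you did offer a sketch, one step in it is genuinely broken. You append the dummy coordinate $1$, form the vectors $\mathbf{x}_n=(c_1\gamma_1^n,\dots,c_{K-1}\gamma_{K-1}^n,1)\in\bbL^K$, and take as one of your $K$ forms $x_1+\dots+x_K$, which vanishes identically on every $\mathbf{x}_n$. This does force the left-hand side of \eqref{Subspace_ToSatisfy} to be $0$, so the inequality holds --- but it also renders the conclusion of Lemma \ref{Lemma_Subspace} vacuous: all your vectors already lie in the single proper subspace $\{x_1+\dots+x_K=0\}$, and nothing in the theorem prevents $\calT_1$ from being exactly that subspace. In that case ``membership in $\calT_i$'' returns the relation you started from, $d_1=\dots=d_K$, your subtraction step produces $0=0$ rather than a shorter equation, and the induction does not close. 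The standard repair is to stay in the affine formulation you had already reached: apply the Subspace Theorem in $m=K-1$ variables to $(c_1\gamma_1^n,\dots,c_{K-1}\gamma_{K-1}^n)$, using at one distinguished place the form $L_0=x_1+\dots+x_{K-1}$, whose value at each solution is $-1$ --- nonzero, hence contributing a bounded factor while the height grows, so that \eqref{Subspace_ToSatisfy} holds nontrivially for solutions of large height. A proper subspace of $\bbL^{K-1}$ then yields a genuinely homogeneous relation $\sum_{j\in J}d_jc_j\gamma_j^n=0$ with $\varnothing\neq J\subseteq\{1,\dots,K-1\}$, to which the inductive hypothesis applies after multiplying back by $\alpha_K^n$. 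The rest of your outline (base cases, reduction of degenerate solutions to minimal vanishing subsums, finiteness of the bounded-height solutions via non-proportionality of the $\mathbf{x}_n$) is sound, modulo the quantitative losses you already acknowledge.
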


 
It should be noted that there are also results concerning upper bounds on the zero multiplicity of a nondegenerate linear recurrence sequence 
$\lbrace u_n \rbrace_{n\in \bbZ}$ of complex numbers by  Schmidt \cite{Schmidt99} and by Schlickewei and Schmidt \cite{SchlickeweiSchmidt2000}, among others. These bounds are more general but they have a worse dependence in the parameter $K$, which is why we do not use them here.

Finally, we present a technical lemma whose aim is to give a more concise proof for Theorem \ref{Thm_Main}.

\begin{lemma} \label{Lemma_O(k+1)}
Let $d$ be a positive integer, $z \in \bbC$ with $|z|<1$. Then for the complex function 
$f(w)=(1+w)^{1/d}$ we have
\begin{align*}
\left\vert f(w)- \sum_{r=0}^k \binom{1/d}{r} w^r \right\vert
=\left\vert \sum_{r=k+1}^\infty \binom{1/d}{r} w^r \right\vert
\leq \frac{1}{d(k+1)(1-|w|)} |w|^{k+1},
\end{align*}
where we have chosen the branch of $(1+z)^{1/d}$ which is holomorphic on $\bbC \setminus (-\infty, -1]$ and which is equal to the positive $d$-th root of $(1+z)$ for $z\in \bbR$, $z>-1$. \\
In particular, when $d=\frac{1}{2}$,  $q=\alpha \beta$, 
$q \geq 2$ and $n \geq 30$,  
if $w= \frac{1}{q^n} -\frac{\alpha^n}{q^n } - \frac{\beta^n}{q^n} $, we have
 \begin{align*}
\left\vert \sum_{r=1}^\infty \binom{1/2}{r} w^r \right\vert 
 <  0.0003
\qquad \mbox{and} \qquad
\left\vert \sum_{r=2}^\infty \binom{1/2}{r} w^r \right\vert
 <  \frac{4.001}{q^{n} }, 
\end{align*}
for $k=0$ and $k=1$, respectively.
\end{lemma}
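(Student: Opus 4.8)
The plan is to prove the two parts of Lemma~\ref{Lemma_O(k+1)} in sequence: first the general tail estimate for $(1+w)^{1/d}$, then the two numerical specializations with $d=2$.

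\textbf{The general estimate.} For the tail of the binomial series I would bound $\left|\binom{1/d}{r}\right|$ for $r\ge 1$. Writing $\binom{1/d}{r}=\frac{1}{r!}\prod_{j=0}^{r-1}\left(\frac1d-j\right)$, the factor $j=0$ contributes $\frac1d$, and for $j\ge 1$ we have $\left|\frac1d-j\right|=j-\frac1d\le j$ (since $0<\frac1d\le 1$), so $\left|\prod_{j=0}^{r-1}(\frac1d-j)\right|\le \frac1d\cdot (r-1)!$, giving $\left|\binom{1/d}{r}\right|\le \frac{1}{dr}$. Hence for $|w|<1$,
\begin{align*}
\left|\sum_{r=k+1}^\infty \binom{1/d}{r}w^r\right|
\le \sum_{r=k+1}^\infty \frac{|w|^r}{dr}
\le \frac{1}{d(k+1)}\sum_{r=k+1}^\infty |w|^r
= \frac{|w|^{k+1}}{d(k+1)(1-|w|)},
\end{align*}
which is exactly the claimed bound. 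The equality of the tail with $f(w)-\sum_{r=0}^k\binom{1/d}{r}w^r$ is just the statement that the binomial series converges to the chosen holomorphic branch on the unit disc, which is standard. I should note that in the displayed lemma the notation is $(1+w)^{1/d}$ but the ``in particular'' clause writes ``$d=\tfrac12$'' where it clearly means the exponent $1/d=1/2$, i.e.\ $d=2$; I would simply treat the relevant exponent as $1/2$.

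\textbf{The numerical specialization.} Here $w=\frac{1-\alpha^n-\beta^n}{q^n}$ with $|\alpha|=|\beta|=\sqrt q$, so $|\alpha^n|=|\beta^n|=q^{n/2}$ and by the triangle inequality $|w|\le \frac{1+2q^{n/2}}{q^n}$. For $q\ge 2$ and $n\ge 30$ this is tiny: $q^{n/2}\ge 2^{15}$, so $|w|\le \frac{1+2q^{n/2}}{q^n}\le \frac{3q^{n/2}}{q^n}=\frac{3}{q^{n/2}}$, and also $|w|\le \frac{3}{2^{15}}<10^{-4}$, so in particular $1-|w|>0.9999$. Plugging $d=2$, $k=0$ into the general bound gives
\begin{align*}
\left|\sum_{r=1}^\infty \binom{1/2}{r}w^r\right|\le \frac{|w|}{2(1-|w|)}\le \frac{1}{2\cdot 0.9999}\cdot \frac{3}{2^{15}}<0.0003,
\end{align*}
and for $k=1$,
\begin{align*}
\left|\sum_{r=2}^\infty \binom{1/2}{r}w^r\right|\le \frac{|w|^2}{4(1-|w|)}\le \frac{1}{4\cdot 0.9999}\cdot \frac{9}{q^n}<\frac{4.001}{q^n},
\end{align*}
using $|w|^2\le 9/q^n$. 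Both numerical inequalities then follow by a short explicit arithmetic check that $\frac{3}{2\cdot 0.9999\cdot 2^{15}}<0.0003$ and $\frac{9}{4\cdot 0.9999}<4.001$.

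\textbf{Expected obstacle.} There is no serious obstacle; the only care needed is the coefficient bound $\left|\binom{1/d}{r}\right|\le \frac{1}{dr}$ (getting the constant $\frac1d$ rather than $1$ out front, which is what makes the factor $\frac1d$ appear in the final bound) and in the specialization keeping track of which power of $q$ one divides by --- the first bound is stated as an absolute constant $0.0003$ while the second is stated as $4.001/q^n$, so one must use $|w|\le 3/q^{n/2}$ for the former and $|w|^2\le 9/q^n$ for the latter rather than a single uniform estimate. The hypothesis $n\ge 30$ is used only to guarantee $q^{n/2}\ge 2^{15}$ so that $1-|w|$ is safely close to $1$; a slightly smaller threshold would also work, but $30$ gives comfortable room.
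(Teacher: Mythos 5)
Your proof is correct, and the numerical specialization follows essentially the same route as the paper: bound $|w|$ by the triangle inequality using $|\alpha^n|=|\beta^n|=q^{n/2}$ and $q^{n/2}\ge 2^{15}$, then feed $|w|$ and $|w|^2$ into the general tail estimate. Two small points of comparison. First, for the general inequality the paper simply cites Lemma 2 of Fuchs--Tichy rather than proving it; your coefficient bound $\left|\binom{1/d}{r}\right|\le \frac{1}{dr}$ followed by the geometric series is exactly the standard argument behind that reference, so you have made the lemma self-contained at no cost. Second, you correctly flag and resolve the $d$ ambiguity: the paper literally substitutes $d=\tfrac12$ into the bound $\frac{1}{d(k+1)(1-|w|)}|w|^{k+1}$, obtaining the weaker estimates $\frac{2|w|}{1-|w|}$ and $\frac{|w|^2}{1-|w|}$, whereas the consistent reading (exponent $1/2$, hence $d=2$) gives your sharper $\frac{|w|}{2(1-|w|)}$ and $\frac{|w|^2}{4(1-|w|)}$; since your constants are smaller, the stated conclusions $<0.0003$ and $<4.001/q^n$ follow a fortiori. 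Your cruder bound $|w|^2\le 9/q^n$ (squaring $|w|\le 3/q^{n/2}$) versus the paper's expansion giving $|w|^2<4.0002/q^n$ is immaterial because of the extra factor of $4$ you gain in the denominator.
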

\begin{proof}
See for example  \cite[Lemma 2]{FuchsTichy2003} for the proof of the first inequality.\\
We shall prove the remaining inequalities. For $w= \frac{1}{q^n} -\frac{\alpha^n}{q^n } - \frac{\beta^n}{q^n} $, $q=\alpha \beta$, $q \geq 2$ and $n \geq 30$, we have
\begin{align*}
|w| &= \left\vert \frac{1}{q^n} -\frac{\alpha^n}{q^n } - \frac{\beta^n}{q^n}\right\vert 
\leq \left\vert \frac{1}{q^{n}}\right\vert  +   \frac{2}{|q|^{n/2} } 
\leq  \frac{1}{2^{30}}  +   \frac{2}{ 2 ^{15} } < 0.0001,
\end{align*}
and
\begin{align*}
|w|^2 & 
= \left\vert \frac{1}{q^n} -\frac{\alpha^n}{q^n } - \frac{\beta^n}{q^n}\right\vert^2
= \left\vert \frac{1}{q^{2n}} - \frac{2}{q^n } \left( \frac{\alpha^n}{q^n } + \frac{\beta^n}{q^n } \right)
 +   \frac{\alpha^{2n}}{q^{2n} } +  \frac{2\alpha^n \beta^n}{q^{2n} } + \frac{\beta^{2n}}{q^{2n} }  \right\vert \\
& \leq   \frac{1}{q^{2n}} + \frac{4}{q^{3n/2} } 
 +   \frac{4}{q^{n} }  
=  \left( \frac{1}{q^{n}} + \frac{4}{q^{n/2} } 
 +   4 \right) \frac{1}{q^{n} } 
\leq  \left( \frac{1}{2^{30}} + \frac{4}{2^{15} } 
 +   4 \right) \frac{1}{q^{n} }  
<  \frac{4.0002}{q^{n} }. 
\end{align*}
Thus, when $d=\frac{1}{2}$, we get
\begin{align*}
\left\vert \sum_{r=1}^\infty \binom{1/2}{r} w^r \right\vert 
\leq \frac{2|w|}{(1-|w|)}   
< 0.0003
\quad \mbox{and} \quad
\left\vert \sum_{r=2}^\infty \binom{1/2}{r} w^r \right\vert 
\leq \frac{|w|^{2} }{(1-|w|)} 
< \frac{ 4.001 }{q^{n} } 
\end{align*}
for $k=0$ and $k=1$ respectively, which is what we wanted. 
\end{proof}

\section{Proof of Theorem \ref{Thm_Main}(a)}  \label{Sec:ProofThmMain}

We denote by $(u(n))^2$, where $u(n) \in \bbZ$, the term in the sequence $\lbrace \#E(\bbFqn)\rbrace_{n>0}$ which can be expressed as a perfect square. We have
\begin{align} \label{u2}
u^2=(u(n))^2 = q^n + 1 - \alpha^n - \beta^n.
\end{align}
Let $w= \frac{1}{q^n} -\frac{\alpha^n}{q^n } - \frac{\beta^n}{q^n} $. We can rewrite \eqref{u2} as
\begin{equation}  \label{u_def}
\begin{split}
u & =  q^{n/2} \left( 1 + w \right)^{1/2} 
 = q^{n/2} \left( 1 + \sum_{r=1}^k \binom{1/2}{r} w^r 
+ \sum_{r=k+1}^\infty \binom{1/2}{r} w^r \right).
\end{split}
\end{equation}
 
We assume that $n \geq 20$ and $q \geq 2$. With $k=1$, we get
\begin{align*}
 u - q^{n/2} \left( 1 + \frac{1}{2}\left(\frac{1}{q^n} -\frac{\alpha^n}{q^n } - \frac{\beta^n}{q^n} \right)
   \right) =  q^{n/2} \sum_{r=2}^\infty \binom{1/2}{r} w^r.  
\end{align*}

 By Lemma \ref{Lemma_O(k+1)}, we obtain the approximation 
\begin{align*}
& \left\vert u - q^{n/2}   + \frac{1}{2} \left(   \frac{\alpha^n}{q^{n/2} }  \right)  +  \frac{1}{2} \left(  \frac{\beta^n}{q^{n/2}}     \right) \right\vert 
<  \frac{ 4.6 }{q^{n/2} },
\end{align*}
or equivalently
\begin{align} \label{LinearForm}
& \left\vert u q^{n/2} - q^{n}  + \frac{\alpha^n}{2 } + \frac{\beta^n}{2 } 
   \right\vert 
< 4.6. 
\end{align}

We apply Lemma \ref{Lemma_Subspace}, viewing the left side of \eqref{LinearForm} as a small linear form, with details  as  follows.\\

Let $\bbL := \bbQ(\sqrt{\alpha}, \sqrt{\beta})$. We have $D=[\bbL:\bbQ]\le 4$. We denote by $\pi$ and $ \opi$ be the prime ideals dividing $q$ in $\calO_\bbL$. 
 Let $\calS$ be the subset of $\calM_\bbL$ consist of the three valuations corresponding to $\lbrace \pi,  \opi, \infty \rbrace $.  
We define the linear forms $L_{i,v}$ for $v\in \calS$ and $i=1, \dotso, 4$ as follows:
\begin{align*}
L_{1,\infty} &:=  X_1 -  X_2  + \frac{1}{2 } X_3 + \frac{1}{2 }  X_4,  \\
L_{i,\infty} & :=  X_i \qquad \mbox{for $i=2, \dotso, 4$}, 
\end{align*}
whereas for $v\in \calS, v\neq \infty$, we put 
\begin{align*}
L_{i,\infty} & :=  X_i \qquad \mbox{for $i=1, \dotso, 4$}. 
\end{align*}
Then $\lbrace L_{1,v}, \dotso, L_{4,v} \rbrace$, $v\in \calS$ are linearly independent sets of linear forms in 4 variables with coefficients in $\bbL$. 
Finally, we define the vector
\begin{align*}
\mathbf{x} = (X_1, \dotso, X_4) = ( uq^{n/2},  q^{n},  \alpha^n,  \beta^n  ) \in \bbL^4.
\end{align*}

With these settings, we can evaluate $H(L_{i,\mu})$ as follows.
For  all $\mu \in \calS$ and $i = 1, \dotso, 4$ except when $\mu = \infty, i=1$, we have $\mathbf{a_i} = (a_1, \dotso, a_4)$ with $a_i=1$ and other entries 0. Thus,
 \begin{align*}
H( L_{i,\mu} )  = H(\mathbf{a_i} ) 
= \prod_{\mu \in \calM_\bbL} \left\vert \mathbf{a_i}  \right\vert_\mu
= 1.
\end{align*}
For $\mu=\infty, i=1$, we have $\mathbf{a} = ( 1, -1,  \frac{1}{2}, \frac{1}{2}  ) $ so that
 \begin{align*}
H( L_{1, \infty} ) & = H  ( \mathbf{a} ) 
= \prod_{\mu \in \calM_\bbL} \left\vert \left( 1, -1, \frac{1}{2}, \frac{1}{2}  \right) \right\vert_\mu  
 \leq 2 = \widetilde{H}.
\end{align*}
Let us note that $H(L_{i,\mu}) \leq \max \lbrace  1, \widetilde{H} \rbrace = \widetilde{H}$. \\

We need to find some explicit $\delta$ with $0<\delta<1$ such that the inequality \eqref{Subspace_ToSatisfy} is satisfied.  
First, we consider the expression 
$ \prod_{\mu \in \calS}	\left\vert \det\left( L_{1,\mu}, \dotso, L_{4,\mu} \right) \right\vert_\mu $ 
and observe that for $\mu=\infty$ we have
$
\left\vert \det\left( L_{1,\infty}, \dotso, L_{4,\infty} \right) \right\vert_\infty
=1
$
as the corresponding matrix is upper-triangular,
whereas 
$ 
\left\vert \det\left( L_{1,\mu}, \dotso, L_{4,\mu} \right) \right\vert_\mu
  = \left \vert 
\det I   
\right \vert_\mu
=1 $
for $\mu \in \calS$, $\mu \neq \infty$, where $I$ is the identity matrix.  
Next, we rewrite
\begin{align*}
\prod_{\mu \in \calS} \prod_{i=1}^4 \left\vert L_{i,\mu}(\mathbf{x}) \right\vert_\mu
& =\left(\prod_{\mu \in \calS}   \left\vert L_{1,\mu}(\mathbf{x}) \right\vert_\mu \right) \left(\prod_{\mu \in \calS} \prod_{i=2}^4 \left\vert L_{i,\mu}(\mathbf{x}) \right\vert_\mu \right).
\end{align*}
We apply the product formula \eqref{prod1} to get 
\begin{align*}
\prod_{\mu \in \calS} \prod_{i=2}^4 \left\vert L_{i,\mu}(\mathbf{x}) \right\vert_\mu = \prod_{i=2}^4 \dfrac{1}{\prod_{\mu \notin \calS} \left\vert X_i \right\vert_\mu}  =1. 
\end{align*}
Besides, using the fact that $u\in \bbZ$ so that $ \left\vert u \right\vert_\pi \leq 1 $, $\left\vert u \right\vert_{\opi} \leq 1$ and 
$ \left\vert L_{1,\infty}(\mathbf{x}) \right\vert_\infty <4.6$ 
as in \eqref{LinearForm},   we have
\begin{align*}
\prod_{\mu \in \calS} \prod_{i=1}^4 \left\vert L_{i,\mu}(\mathbf{x}) \right\vert_\mu
&  =  \left\vert X_1 \right\vert_\pi 
    \left\vert X_1 \right\vert_{\opi}
      \left\vert L_{1,\infty}(\mathbf{x}) \right\vert_\infty 
  =  \left\vert u q^{\frac{n}{2}} \right\vert_\pi  \,
    \left\vert  u q^{\frac{n}{2}} \right\vert_{\opi} \,
  \left\vert L_{1,\infty}(\mathbf{x}) \right\vert_\infty  \\
& <4.6 \left\vert q^{\frac{n}{2}}  \right\vert_\pi \,
    \left\vert   q^{\frac{n}{2}}  \right\vert_{\opi} 
= 4.6 q^{-\frac{n}{2}}. 
\end{align*}

Now, we note that $ H(\mathbf{x}) = \prod_{\mu \in \calM_\bbL} 
 {\vert \bfx \vert_\mu}
 = \left( \prod_{\mu \in \calS} 
 {\vert \bfx \vert_\mu} \right) \left( \prod_{\mu \notin \calS}  {\vert \bfx \vert_\mu} \right) $, 
 where 
$$
 \prod_{\mu \notin \calS}  {\vert \bfx \vert_\mu} 
  =  \prod_{\mu \notin \calS}  \max\left(|X_1|_\mu, \dotso, |X_4|_\mu \right) = 1.
  $$ 
  Therefore, we get
\begin{align*}
\prod_{\mu \in \calS} \prod_{i=1}^4 
\dfrac{1}{\vert \bfx \vert_\mu}
= \left( \prod_{\mu \in \calS}  
 \vert \bfx \vert_\mu  \right)^{-4}
= \left(\left( \prod_{\mu \in \calS}  
 \vert \bfx \vert_\mu  \right)\left( \prod_{\mu \notin \calS}  
 \vert \bfx \vert_\mu  \right)\right)^{-4}
 = \left(H(\bfx)\right)^{-4}.
\end{align*}
 
Besides, we deduce  that
\begin{align*}
{\vert \bfx \vert_\pi} 
& =  \max\left(|X_1|_\pi, \dotso, |X_4|_\pi \right)  
  =   \max\left(| u q^{n/2} |_\pi,  |q^{n }|_\pi,  | \alpha^n|_\pi, | \beta^n|_\pi  \right)  
 =1, \\
{\vert \bfx \vert_{\opi}}
 & =  \max\left(|X_1|_{\opi}, \dotso, |X_4|_{\opi} \right) 
 =   \max\left(|u q^{n/2} |_{\opi},  |q^{n }|_{\opi},    | \alpha^n|_{\opi}, | \beta^n|_{\opi}   \right)  
 =1. 
\end{align*}
Next, we bound ${\vert \bfx \vert_\infty}$. Using \eqref{u_def} and Lemma \ref{Lemma_O(k+1)}, we rewrite $\vert X_1 \vert_\infty$ as
\begin{align*}
\vert X_1 \vert_\infty 
& = \vert u q^{n/2} \vert_\infty 
= \left\vert  q^{n } \left( 1 + \sum_{r=1}^\infty \binom{1/2}{r} w^r \right) \right\vert_\infty  
< 1.0003 q^n,
\end{align*}
so that with $q \geq 2$ and the assumption $n\geq 30$,  
\begin{align*}
{\vert \bfx \vert_\infty}
& = \left( \sum_{i=1}^4 \vert X_i \vert_\infty^D \right)^{1/D}  
 = \left( \vert X_1 \vert_\infty^4  +  \vert q^{n } \vert_\infty^4 +  \vert \alpha^n \vert_\infty^4 +  \vert \beta^n \vert_\infty^4   \right)^{1/4} \\
& < \left( q^{4n} \left( 1.0003^4 +  1 +  \frac{2}{q^{2n}}   \right) \right)^{1/4} 
\leq \left( q^{4n} \left( 1.0003^4 +  1 +  \frac{2}{2^{60}}   \right) \right)^{1/4} \\
& < 1.2 q^{n}.
\end{align*}
This yields 
\begin{align*}
H(\bfx) =   \prod_{\mu \in \calM_\bbL} 
 {\vert \bfx \vert_\mu}
 = \left( \prod_{\mu \in \calS} 
 {\vert \bfx \vert_\mu} \right) \left( \prod_{\mu \notin \calS}  {\vert \bfx \vert_\mu} \right) 
 =  {\vert \bfx \vert_\pi} \, {\vert \bfx \vert_{\opi}}  \, {\vert \bfx \vert_\infty} 
< 1.2 q^{n},
\end{align*}
 and hence
$H(\bfx)^{-\delta} 
> 0.8 q^{-n\delta}$. 
 We note that for $q\geq 2$ and $n \geq 30$,
\begin{align*}
\prod_{\mu \in \calS} \prod_{i=1}^4 \left\vert L_{i,\mu}(\mathbf{x}) \right\vert_\mu
& < 4.6 q^{-\frac{n}{2}} 
= \left( 4.6 q^{-\frac{1}{10}n} \right) q^{-\frac{2}{5}n} 
\leq \left( \frac{4.6}{2^{3}}   \right) q^{-\frac{2}{5}n} 
< 0.8 q^{-\frac{2}{5}n}. 
\end{align*}

Finally, we take $\delta:=2/5$, and then the above estimates altogether give
\begin{align*}
\prod_{\mu \in \calS} \prod_{i=1}^4 \frac{|L_{i,\mu}(\bfx)|_{\mu}}{|\bfx|_\mu} 
<0.8 q^{-\frac{2}{5}n} H(\bfx)^{-4}
\leq H(\bfx)^{-4-\delta} \prod_{\mu \in \calS} |\det(L_{1,\mu}, \dotso, L_{4,\mu})|_\mu,
\end{align*}
and thus \eqref{Subspace_ToSatisfy} is satisfied.
Now we apply Lemma \ref{Lemma_Subspace} with $m=4$, $D=4$, $\# \calS=3$, to deduce that there exist $t$ proper linear subspaces $\calT_1, \dotso, \calT_t$ of $\bbL^4$, with
\begin{equation*}
\begin{split}
t & \leq (3m)^{2m \# \calS} 2^{3(m+9)^2} \delta ^{-m \# \calS - m - 4} \log 4D \log \log 4D  
< 10^{187}
\end{split}
\end{equation*}
such that every solution $\bfx \in \calX$ with
\begin{equation} \label{H_low2}
H(\bfx) \geq \max \lbrace m^{4m/\delta}, H \rbrace
\geq 1.2\times 10^{24}
\end{equation}
belongs to $\calT_1 \cup \cdots \cup \calT_t$. \\

Since we are in case (a), we have that ${\alpha}/{\beta}$ is not a root of unity. 
Let $\calT$ be one of these subspaces and suppose that $\calT$ does not depend on $X_1$. 
Then the solution 
$$
\bfx = ( uq^{n/2}, q^{n}, \alpha^n,  \beta^n) \in \calX
$$ 
satisfying \eqref{H_low2} satisfies an equation of the form
\begin{equation*}
 a_1  q^{n} + a_2 \alpha^n + a_3 \beta^n =0
\end{equation*}
for some vector of coefficients 
$(a_1, a_2, a_3) \in \bbL^3$ not all zero.
By Lemma \ref{Lemma_vdPSch}, this relation can hold for at most 
\begin{align*}
c(K, D, \omega) = c(3, 4, 2) = (3-1)(4(4+2))^{2(2)+1} = 15925248
\end{align*}
values of $n$. 
Suppose next that $\calT$ is one of these subspaces and  $\calT$   depends on $X_1$.  
Then the solution $\bfx = ( uq^{n/2}, q^{n}, \alpha^n,  \beta^n) \in \calX$ with \eqref{H_low2} satisfies an equation of the form
\begin{equation*}  
 uq^{n/2}+ a_1  q^{n} + a_2 \alpha^n + a_3  \beta^n =0
\end{equation*}
for some vector of coefficients 
$(1, a_1, a_2, a_3) \in \bbL^4$ not all zero. Together with \eqref{u2}, we have
\begin{align*} 
u^2 q^n = q^{2n} + q^n  - q^n \alpha^n - q^n \beta^n =  (a_1  q^{n} + a_2 \alpha^n + a_3  \beta^n)^2,
\end{align*}
yielding
\begin{align*} 
c_1 q^{2n} + c_2 \alpha^{2n} + c_3 \beta^{2n}  + c_4 q^n \alpha^n  + c_5 q^n \beta^n + c_6 q^n = 0,
\end{align*}
where $c_j \in \bbL, \; 1\leq j \leq 6$.
By Lemma \ref{Lemma_vdPSch}, this can hold for at most 
\begin{align*}
c(K, D, \omega) = c(6, 4, 2) = (6-1)(4(4+2))^{2(2)+1} = 39813120
\end{align*}
values of $n$.

Next, we consider the solutions of ``small height" $\bfx = ( uq^{n/2}, q^{n}, \alpha^n,  \beta^n) \in \calX$ with 
\begin{equation*} 
H(\bfx) < \max \lbrace m^{4m/\delta}, H \rbrace
< 1.3\times 10^{24}.
\end{equation*}
Note that
\begin{equation} \label{Hxlow}
\begin{split}
H(\bfx)
& = \left( \prod_{\mu \in \calS}  
 \vert \bfx \vert_\mu \right)\left( \prod_{\mu \notin \calS}  
 \vert \bfx \vert_\mu \right)  
= \prod_{\mu \in \calS}  
 \vert \bfx \vert_\mu  
 =  {\vert \bfx \vert_\pi} \, {\vert \bfx \vert_{\opi}}  \, {\vert \bfx \vert_\infty} 
= \left( \sum_{i=1}^4 \vert X_i \vert_\infty^D \right)^{1/D}   \\
&  > \vert X_1 \vert_\infty 
= \vert u q^{n/2} \vert_\infty 
= \left\vert  q^{n } \left( 1 + \sum_{r=1}^\infty \binom{1/2}{r} w^r \right) \right\vert_\infty \\ 
& > q^n.
\end{split}
\end{equation}
Thus, we have 
$$
n  < \frac{\log(1.3 \times 10^{24})}{\log q} < \frac{\log(1.3 \times 10^{24})}{\log 2}< 80.
$$
Therefore, we obtained the upper bound 
\begin{align*}
 80+ (15925248+39813120)10^{187} < 5.6 \times 10^{194}
\end{align*}
on the number of possible values of $n$ such that $\#E({\mathbb F}_{q^n})$ is a square, which finishes the proof of part (a). 


\section{Proof of Theorem \ref{Thm_Main}(b) and (c)}  \label{Sec:ProofThmMain}

In this section, we assume that ${\alpha}/{\beta} = \zeta = e^{\frac{2\pi ki}{m}}$ is  a root of unity with $(k, m)=1$. Since $\alpha$, $\beta$ are roots of $x^2-ax+q=0$, we have $\deg (\zeta  ) = \deg( e^{\frac{2\pi ki}{m}} ) = \phi(m) \leq 2$, giving $m=1$, $2$, $3$, $4$ or $6$. This takes care of the first assertion from (b). 
We shall show that  $ \# E(\bbFqn)$ is a perfect square if $n \equiv 0 \;\mathrm{mod}\;m$. For $n \not\equiv 0 \;\mathrm{mod}\;m$, we find all $n$ such that $\# E(\bbFqn)$ is a perfect square. 
We consider each value of $m$ below.\\

\noindent
\textbf{(1). $m=1$}\\
This gives ${\alpha}/{\beta} = \zeta = e^{ 2\pi ki } = 1$, that is $\alpha = \beta \in \bbZ$ and hence 
$\alpha = \beta = \pm q^{1/2}$.
This yields
\begin{align*}
a = \pm 2\sqrt{q}.
\end{align*}
In order that $a\in \bbZ$, we need $\sqrt{q}\in \bbZ$, therefore
\begin{align*}
q = p^{2v} \mbox{ and } a &=\pm 2p^{v}, \quad \mbox{where $v \in \bbZ$.}
\end{align*}
Then for all $n\in \bbZ$, $a_n =  \alpha^n + \beta^n = 2(\pm q^{1/2})^n$ and
\begin{align*}
\# E(\bbFqn) 
&= q^n+1-\alpha^n- \beta^n  
  = q^n+1 - 2(\pm q^{1/2})^n 
  = ((\pm q^{1/2})^n - 1)^2
  = ((\pm p^{v})^n - 1)^2
\end{align*}
is a perfect square.\\

\noindent
\textbf{(2). $m=2$}\\
This gives $k=1$, ${\alpha}/{\beta} = \zeta = e^{  \pi  i } = -1$, that is $\alpha = -\beta \in \bbZ$ and hence without loss of generality we may assume that
$\alpha =  q^{1/2} i$  and $\beta = -q^{1/2} i$.
This yields to $a=0$.
When $n$ is odd,  
\begin{align*}
\# E(\bbFqn) 
&= q^n+1-\alpha^n- \beta^n  
  = q^n+1.  
\end{align*}
By known results about the Catalan equation, the only solution when either $n>1$ or when $n=1$ and $q$ is not prime is $2^3+1=3^2$. This gives the solutions $(q,a,n)=(2,0,3),~(8,0,1)$. When 
$n=1$ and $q=p$ is prime, then setting $p+1=u^2$, we get $p=(u-1)(u+1)$. Since $p$ is prime, the only possibility is $u-1=1$, so $(u,p)=(2,3)$, which gives the solution $(q,a,n)=(3,0,1)$. 

When $n \equiv 2 \mbox{ mod }4$,
$a_n = a_{2r} =  \alpha^n + \beta^n = - 2q^{n/2}$ and
\begin{align*}
\# E(\bbFqn) 
&= q^n+1-\alpha^n- \beta^n  
  = q^n+1 + 2q^{n/2} 
  = (q^{n/2} + 1)^2
\end{align*}
is a perfect square for all $q$.\

When $n \equiv 0 \mbox{ mod }4$,
$a_n = a_{2r} =  \alpha^n + \beta^n =  2q^{n/2}$ and
\begin{align*}
\# E(\bbFqn) 
&= q^n+1-\alpha^n- \beta^n  
  = q^n+1 - 2q^{n/2} 
  = (q^{n/2} - 1)^2
\end{align*}
is a perfect square for all $q$. \

Finally, we note from Lemma \ref{Lemma_Waterhouse} that there is no corresponding elliptic curve for $(q, a)$ if $a=0$, $q=p^{2v}$ where  $p \equiv 1 \mbox{ mod }4, v\in \bbZ$. \\

\noindent
\textbf{(3). $m=3$}\\
This gives  ${\alpha}/{\beta} = \zeta = e^{\frac{2  \pi k  i}{3}  }$. Then either (i).
$\alpha =  q^{1/2} e^{\frac{ \pi  i}{3}  }$  and $\beta = q^{1/2} e^{-\frac{  \pi  i}{3}  }$ for $k=1$, or
(ii). 
$\alpha =  q^{1/2} e^{\frac{ 2 \pi  i}{3}  }$  and $\beta = q^{1/2} e^{-\frac{ 2 \pi  i}{3}  }$ for $k=2$. 
Thus, we have either \\
 (i). \quad $a =  q^{1/2}, \quad 
 \# E(\bbFqn) 
 = q^n+1-\alpha^n- \beta^n  
  = q^n+1 - 2 q^{n/2} \cos \left(\frac{n\pi}{3} \right)  $, 
or \\
 (ii).  \quad $a =  -q^{1/2}, \quad 
 \# E(\bbFqn) 
 = q^n+1-\alpha^n- \beta^n  
  = q^n+1 - 2 q^{n/2} \cos \left(\frac{2n\pi}{3} \right)$. \\
We shall consider different scenarios for $n$.\\

When $n \equiv 0 \mbox{ mod }6$, in both (i) and (ii) we have  
\begin{align*}
 \# E(\bbFqn) 
&=  q^n+1 - 2 q^{n/2}  = (  q^{n/2}  -1)^2. 
\end{align*}
This is a perfect square whenever $q^{n/2}  -1 \in \bbZ$; i.e.,
$q = p^{2v}, \quad v\in \bbZ$.
Therefore, the corresponding $a$ and $a_n$ are\\
(i). \quad $a=p^v, a_n= 2 p^{vn}  $, \qquad
(ii). \quad $a=-p^v, a_n= 2 p^{vn}  $.\\

When $n \equiv 3 \mbox{ mod }6$, we have  \\
(i). \quad $a =  q^{1/2}, \quad 
 \# E(\bbFqn) 
  = q^n+1 + 2 q^{n/2} = (q^{n/2}+1)^2 $, 
or \\
 (ii).  \quad $a =  -q^{1/2}, \quad 
 \# E(\bbFqn) 
  = q^n+1 -2 q^{n/2} = (q^{n/2}-1)^2$. \\
This is a perfect square whenever $q^{n/2} \pm 1 \in \bbZ$; i.e.,
$q = p^{2v}, \quad v\in \bbZ$.
Therefore, the corresponding $a$ and $a_n$ are\\
(i). \quad $a=p^v, a_n= 2 p^{vn}  $, \qquad
(ii). \quad $a=-p^v, a_n= -2 p^{vn}  $.\\
Finally, we note from Lemma \ref{Lemma_Waterhouse} that there is no corresponding elliptic curve for $(q, a)$ if $a=\pm \sqrt{q}$, $q=p^{2v}$ with $p \equiv 1 \mbox{ mod }3$. \\

This was for $3\mid n$. If $3\nmid n$, then both $\cos(n\pi/3),\cos(2n\pi /3)\in \{\pm 1/2\}$. So, we are lead to the equations $q^n\pm q^{n/2}+1=u^2$. We may assume that $u$ is a positive integer. 
This gives $(u-q^{n/2})(u+q^{n/2})=\pm q^{n/2}+1$. 
In case the sign is $+$ in the right--hand side above, we then get that $u-q^{n/2}>0$. Thus, the number $u+q^{n/2}>2q^{n/2}$ is a factor of $q^{n/2}+1<2q^{n/2}$, which is impossible. In case 
the sign is $-$ in the right--hand side above, we get that $u+q^{n/2}>q^{n/2}$ is a factor of $q^{n/2}-1<q^{n/2}$, which is again impossible. So, there are no solutions with $n$ coprime to $3$ in this case. \\

\noindent
\textbf{(4). $m=4$}\\
This gives $\frac{\alpha}{\beta} = \zeta = e^{\frac{2 \pi k  i}{4}  } $. Then either (i).
$\alpha =  q^{1/2} e^{\frac{ \pi  i}{4}  }$  and $\beta = q^{1/2} e^{-\frac{  \pi  i}{4}  }$ for $k=1$, or
(ii). 
$\alpha =  q^{1/2} e^{\frac{ 3 \pi  i}{4}  }$  and $\beta = q^{1/2} e^{-\frac{ 3 \pi  i}{4}  }$ for $k=3$. 
Thus we have either \\
 (i). \quad $a =  \sqrt{2 q }, \quad 
 \# E(\bbFqn) 
 = q^n+1-\alpha^n- \beta^n  
  = q^n+1 - 2 q^{n/2} \cos \left(\frac{n\pi}{4} \right)  $, 
or \\
 (ii).  \quad $a =  -\sqrt{2 q }, \quad 
 \# E(\bbFqn) 
 = q^n+1-\alpha^n- \beta^n  
  = q^n+1 - 2 q^{n/2} \cos \left(\frac{3n\pi}{4} \right)$. \\
We shall consider different scenarios for $n$.\\

When $n \equiv 0 \mbox{ mod }8$, in both (i) and (ii) we have  
\begin{align*}
 \# E(\bbFqn) 
&=  q^n+1 - 2 q^{n/2}  = (  q^{n/2}  - 1)^2 ,
\end{align*}
which is a perfect square for all $q$.
In order that $a \in \bbZ$, we need $ \sqrt{2q} \in \bbZ$, implying $q=2^v$ with odd $v$.
Therefore, the corresponding   $a$ and $a_n$ are\\
(i). \quad  $a=2^{\frac{v+1}{2}}, a_n= -2^{\frac{nv+2}{2}}  $, \qquad
(ii). \quad $a=-2^{\frac{v+1}{2}}, a_n= -2^{\frac{nv+2}{2}}$.\\

When $n \equiv 4 \mbox{ mod }8$, in both (i) and (ii) we have  
\begin{align*}
 \# E(\bbFqn) 
&=  q^n+1 + 2 q^{n/2}  = (  q^{n/2}  + 1)^2,
\end{align*}
which is a perfect square for all $q$. 
In order that $a \in \bbZ$, we need $ \sqrt{2q} \in \bbZ$, implying $q=2^v$ with odd $v$.
Therefore, the corresponding   $a$ and $a_n$ are\\
(i). \quad  $a=2^{\frac{v+1}{2}}, a_n= -2^{\frac{nv+2}{2}}  $, \qquad
(ii). \quad $a=-2^{\frac{v+1}{2}}, a_n= -2^{\frac{nv+2}{2}}$.\\
 
Assume next that $n\not\equiv 0\mbox{ mod } 4$. If $n$ is even, then $n\equiv 2,6 \mbox{ mod } 8$ so $a_n=0$. This leads to $u^2=q^n+1$, a case which has been dealt with at the case $m=2$ above. 
If $n$ is odd, then   $\cos(n\pi/4),\cos(3n\pi/4)\in \{\pm 2^{-1/2}\}$, so we get $u^2=q^n\pm 2^{1/2} q^{n/2}+1$. Thus $2^{1/2} q^{n/2}\in {\mathbb Z}$, which shows that $q=2^b$ with $b$ and $n$ odd. Hence, 
$u^2=2^{bn}\pm 2^{(bn+1)/2}+1$. The equations $u^2=2^x\pm 2^y+1$ with $x\ge y$ have been solved by Szalay in \cite{Sz}. Aside from the parametric solutions with $(x,y)=(2t,t+1)$ for both signs and 
$(x,y)=(t,t)$ for the sign $-$, it has the sporadic solutions $(x,y)=(5,4),~(9,4)$ for the sign $+$ and $(x,y)=(5,3),~(7,3),~(15,3)$ for the sign $-$. Thus, we get that either $(bn,(bn+1)/2))=(2t,t+1),~(t,t)$ 
for some integer $t$, or it is one of the $5$ sporadic solutions. The possibility $(bn, (bn+1)/2)=(2t,t+1)$ is not convenient since for us both $b$ and $n$ are both odd. The solution $(bn,(bn+1)/2)=(t,t)$ leads to 
$bn=(bn+1)/2$, which gives $b=n=1$, so $(q,a,n)=(2,2,1)$. Of the remaining $5$ sporadic solutions only $(bn,(bn+1)/2)=(5,3)$ is convenient and leads to $bn=5$, so $(q,n)=(5,1),~(2,5)$. 
This leads to $(q,a,n)=(32,8,1),~(2,-2,5)$. \\


\noindent
\textbf{(5). $m=6$}\\
This gives ${\alpha}/{\beta} = \zeta = e^{\frac{ \pi k i}{3}  } $. Then either (i).
$\alpha =  q^{1/2} e^{\frac{\pi i}{6} }$  and $\beta = q^{1/2} e^{-\frac{  \pi  i}{6} }$ for $k=1$, or
(ii). 
$\alpha =  q^{1/2} e^{\frac{ 5 \pi  i}{6}  }$  and $\beta = q^{1/2} e^{-\frac{ 5\pi  i}{6}  }$ for $k=5$.  
Thus, we have either \\
 (i). \quad $a =  \sqrt{3 q }, \quad 
 \# E(\bbFqn) 
 = q^n+1-\alpha^n- \beta^n  
  = q^n+1 - 2 q^{n/2} \cos \left(\frac{n\pi}{6} \right)  $, 
or \\
 (ii).  \quad $a =  -\sqrt{3 q }, \quad 
 \# E(\bbFqn) 
 = q^n+1-\alpha^n- \beta^n  
  = q^n+1 - 2 q^{n/2} \cos \left(\frac{5n\pi}{6} \right)$. \\
We shall consider different scenarios for $n$.\\

When $n \equiv 0 \mbox{ mod }12$, in both (i) and (ii) we have  
\begin{align*}
 \# E(\bbFqn) 
&=  q^n+1 - 2 q^{n/2}  = (  q^{n/2}  - 1)^2,
\end{align*}
which is a perfect square for all $q$.
In order that $a \in \bbZ$, we need $ \sqrt{3q} \in \bbZ$, implying $q=3^v$ with odd $v$.
Therefore, the corresponding   $a$ and $a_n$ are\\
(i). \quad  $a=3^{\frac{v+1}{2}}, a_n= 2(3^{\frac{nv }{2}} ) $, \qquad
(ii). \quad $a=-3^{\frac{v+1}{2}}, a_n=2(3^{\frac{nv }{2}} )$.\\

When $n \equiv 6 \mbox{ mod }12$, in both (i) and (ii) we have  
\begin{align*}
 \# E(\bbFqn) 
&=  q^n+1 + 2 q^{n/2}  = (  q^{n/2}  + 1)^2, 
\end{align*}
which is a perfect square for all $q$. 
In order that $a \in \bbZ$, we need $ \sqrt{3q} \in \bbZ$, implying $q=3^v$ with odd $v$.
Therefore, the corresponding   $a$ and $a_n$ are\\
(i). \quad  $a=3^{\frac{v+1}{2}}, a_n= -2(3^{\frac{nv }{2}} ) $, \qquad
(ii). \quad $a=-3^{\frac{v+1}{2}}, a_n=-2(3^{\frac{nv }{2}} )$.\\

When $n$ is an odd multiple of $3$, we get that $  \# E(\bbFqn)   = q^n+1 $, a case treated at the case $m=2$. When $n$ is even and coprime to $3$, we get 
$u^2=q^{n}\pm q^{n/2}+1$, a case already treated at $m=3$ above. Finally, when $n$ is coprime to $6$, then 
$\cos(n\pi/6), \cos(5n\pi/6)\in \{\pm 3^{1/2}/2\}$. In this case, we get $u^2=q^n\pm 3^{1/2} q^{n/2}+1$. Since $3^{1/2}q^{n/2}\in {\mathbb Z}$, it follows that $q=3^b$ with $b$ and $n$ both odd. 
The equation $u^2=p^x\pm p^y+1$ with an odd prime $p$ and integers $x> y$ has been solved by Luca \cite{Luca}. Its only solutions are $(p,x,y)=(3,3,1),~(5,3,1)$. Thus, if 
$bn>1$, then the only possibility is $(bn,(bn+1)/2)=(3,1)$, which does not have a convenient integer solution $b,n$. The solution with $bn=1$ leads to $b=n=1$, so $(q,a,n)=(3,3,1)$.  This finishes the proof of our theorem.\\

\textbf{Acknowledgements.}   K.~C.~Chim is supported by the Austrian Science Fund (FWF) under the project F5510-N26. Most of the work took place when both authors were visiting the Max Planck Institute for Mathematics
in Bonn, Germany between October and November of 2019. The authors would like to thank the institute for the hospitality, nice working environment and computer support.

\bibliographystyle{plain}

\bibliography{Rec}

\end{document}